\newlist{myenum}{enumerate}{3}
\setlist[myenum,1]{label={\normalfont(\alph*)},
                   ref  ={\normalfont(\alph*)}}
\setlist[myenum,2]{label={\normalfont(\arabic*)},
                   ref  =\themyenumi{.\normalfont{(\arabic*)}}}
\crefname{myenumi}{}{}
\crefname{myenumii}{}{}
\theoremstyle{plain}
\newtheorem{definition}{Definition}
\newtheorem{lemma}{Lemma}
\newtheorem{corollary}{Corollary}
\crefname{assumption}{assumption}{assumptions}
\Crefname{assumption}{Assumption}{Assumptions}
\theoremstyle{remark}
\newtheorem{remark}{Remark}[]
\newcommand{\R}{\mathbb{R}}
\newcommand{\eps}{\varepsilon}
\newcommand{\F}{\mathcal{F}}
\title{Please, not \textit{another} note about Generalized Inverses}
\author{Philipp Wacker\thanks{University of Canterbury, Christchurch, New Zealand, \url{phkwacker@gmail.com}}}
\date{May 2023}
\begin{document}

\maketitle

\begin{abstract}
    We prove some statements of left- and right-continuous variants of generalized inverses of non-decreasing real functions. 
\end{abstract}

\section{Introduction}
This short manuscript fills a few theoretical gaps in the recorded knowledge about generalized inverses (also called \textit{quantile functions} in the context of probability theory), and corrects a few inaccuracies in the existing literature. While there is a certain overlap with parts of \cite{vicenik1999note,klement1999quasi,feng2012note,embrechts2013note,de2015study,kampke2015generalized}, none of these give the full picture of generalized inverses, and there are persistent errors that need rectification. Finally, this note\footnote{this is a popular title for communicating results about generalized inverses, see the references section.} presents some (as far as the author is aware) new insights about the exact form of $T\circ T^{-1}$ and $T^{-1}\circ T$, where $T^{-1}$ is a generalized inverse. 

\textbf{Notation:} In the following, we define $f(x+) = \lim_{\eps\searrow x}f(x+\eps)$ and $f(x-) = \lim_{\eps\searrow x}f(x-\eps)$ for a function $f:\R\to\R$. Similarly, $f(-\infty) = \lim_{x\to -\infty}f(x)$ and $f(\infty) = \lim_{x\to +\infty}f(x)$. A non-decreasing function is said to be a map $f:\R\to \R$ such that $x<y$ implies $f(x)\leq f(y)$. We denote by $\overline \R = \R \cup \{-\infty,\infty\}$ the set of the extended real numbers.

We start by defining generalized inverses.
\begin{definition}[generalized inverse]
Let $T:\R\to \R$ be a non-decreasing function where we set $T(-\infty) = \lim_{x\to-\infty}T(x)$ and $T(\infty)=\lim_{x\to\infty}T(x)$. Then the generalized inverses $T^+:\R\to\bar \R$ and $T^-:\R\to\bar \R$ of $T$ are defined by
\begin{align}
    T^+(y) &= \inf\{x\in \R: T(x) > y\}\\
    T^-(y) &= \inf \{x\in \R: T(x) \geq y\}.
\end{align}
with the convention that $\inf \emptyset = \infty$ and $\inf \R = -\infty$.
\end{definition}
\begin{remark}
    \cite{feng2012note} proved that we can equivalently write $T^+(y) = \sup\{x\in\R: T(x) \leq y\}$ and $T^-(y) = \sup\{x\in\R: T(x) <y\}$, as long as we make sure that the domain of $T$ is the whole of $\R$.
\end{remark}
\begin{remark}
    $T^+$ and $T^-$ are the right- and left-continuous generalized inverses of $T$, in the sense outlined by lemma \ref{lem:generalizedinv_prop}\ref{item:cadlag} below.
\end{remark}

\section{Useful statements for working with generalized inverses}
We follow up with a list of elementary properties of $T^+$. Parts \ref{item1}--\ref{item:boundT-T+} are a generalization of \cite[Proposition 1]{embrechts2013note} to the case of both $T^+$ and $T^-$. Part \ref{item:T-unequalT+} is proven in \cite[section 2]{klement1999quasi}. Parts \ref{item:relations} is similar to \cite[Proposition 4.2]{de2015study}, and prove and sharpen all results to cover both the case $T^+$ and $T^-$. \ref{item:relations_rightcont} and \ref{item:relations_leftcont} are new and show what we can say if $T$ is left- or right-continuous. Parts \ref{itemTT+-} and \ref{itemTT+-2} correct a mistake in \cite[Proposition 4.3]{de2015study} (see remark below), and generalize the statement to handle $T^-$, as well. 
\begin{lemma}\label{lem:generalizedinv_prop} Let $T:\R\to\R$ be a nondecreasing map.
    \begin{myenum}    
        \item \label{item1} 
            \begin{myenum}
                \item $T^+(y) = -\infty$ if and only if $T(x) > y$ for all $x\in \R$.
                \item $T^+(y) = \infty$ if and only if $T(x)\leq y$ for all $x\in \R$.
                \item $T^-(y) = -\infty$ if and only if $T(x) \geq y$ for all $x\in \R$.
                \item $T^-(y) = \infty$ if and only if $T(x)< y$ for all $x\in \R$.
            \end{myenum} 
        \item \label{item:nondecreasing}$T^+$ and $T^-$ are nondecreasing.
        \item \label{item:cadlag}$T^+$ is right-continuous, and $T^-$ is left-continuous. For all $y\in\R$,
        \begin{myenum}
            \item $T^+(y-) = T^-(y-) = T^-(y)$
            \item $T^-(y+) = T^+(y+) = T^+(y)$
        \end{myenum}
        \item \label{disc} $T^+$ is continuous at $y$ if and only if $T^-$ is continuous at $y$.
        \item \label{item:boundT-T+}$T^-(y)\leq T^+(y)$
        \item \label{item:T-unequalT+} $T^-(y) = T^+(y)$ if and only if $\mathrm{Card} (T^{-1}(\{y\})\leq 1$.
        \item \label{item:relations}The following relations hold:
        \begin{myenum}
            \item \label{item2dot6} If $y \leq T(x)$, then $T^-(y) \leq x$. Equivalently, \\if $x < T^-(y)$, then $T(x) < y$. 
            \item \label{item2dot5} If $y < T(x)$, then $T^+(y)\leq x$. Equivalently, \\if $x < T^+(y)$, then $T(x) \leq y$.
            \item \label{itemT-T}$T^-(T(x))\leq x$
            \item \label{item2dot65} If $y > T(x)$, then $T^-(y) \geq x$. Equivalently, \\if $x > T^-(y)$, then $T(x)\geq y$.
            \item \label{item2dot75} If $y \geq T(x)$, then $T^+(y) \geq x$. Equivalently, \\if $x > T^+(y)$, then $T(x) > y$. 
            \item \label{itemT+T}$T^+(T(x))\geq x$.
            \item \label{item3} If $T^+(T(x)) = T^-(T(x))$, then $T^+(T(x)) = T^-(T(x)) = x$
            \item \label{item4} $T(T^+(y)-) \leq y$
        \end{myenum}
        \item \label{item:relations_rightcont}Let $T$ be right-continuous at $x$. Then the following relations hold:
        \begin{myenum}
            \item \label{item2dot8}If $y>T(x)$, then $T^-(y) > x$. Equivalently, \\if $x\geq T^-(y)$, then $T(x)\geq y$.
            \item \label{item2dot9}If $y>T(x)$, then $T^+(y) > x$. Equivalently, \\if $x\geq T^+(y)$, then $T(x)\geq y$.
            \item \label{equiv_rightcont} $ y \leq T(x)$ if and only if $T^-(y)\leq x$.
        \end{myenum}
        \item \label{itemTT+-}If $T$ is right-continuous at $x=T^+(y)$, then $T(T^+(y)) \geq y$. If $T$ is right-continuous at $x=T^-(y)$, then $T(T^-(y))\geq y$.
        \item \label{item:relations_leftcont}Let $T$ be left-continuous at $x$. Then the following relations hold:
        \begin{myenum}
            \item \label{itemrlc1}If $y<T(x)$, then $T^-(y) < x$. Equivalently, \\if $x\leq T^-(y)$, then $T(x)\leq y$.
            \item \label{itemrlc2}If $y<T(x)$, then $T^+(y) < x$. Equivalently, \\if $x\leq T^+(y)$, then $T(x)\leq y$.
            \item \label{equiv_rightcont2} $ y \geq T(x)$ if and only if $T^+(y)\geq x$.
        \end{myenum}
        \item \label{itemTT+-2}If $T$ is left-continuous at $x=T^+(y)$, then $T(T^+(y)) \leq y$. If $T$ is left-continuous at $x=T^-(y)$, then $T(T^-(y))\leq y$.        
        \item \label{item:TT+=x} If $T$ is continuous at $T^+(y)$, then $T(T^+(y)) = y$. If $T$ is continuous at $T^-(y)$, then $T(T^-(y)) = y$.
        \item \label{equal2} If $T$ is constant on an interval  $I=(x_1, x_2)$, then for all $x\in I$ we have $T^+(T(x)) > x > T^-(T(x))$.
        \item \label{version} We define the left-continuous and right-continuous versions $T_l(x):=T(x-)$ and $T_r(x):=T(x+)$ of $T$. Then $T_l^+ = T_r^+$ as well as $T_l^-=T_r^-$.
    \end{myenum}
\end{lemma}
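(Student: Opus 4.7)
The plan is to exploit the sandwich $T_l(x) \le T(x) \le T_r(x)$, which holds because $T$ is nondecreasing. Indeed, $T_l(x) = \sup_{z<x} T(z)$ and $T_r(x) = \inf_{z>x} T(z)$ exist in $\overline{\R}$ and are themselves nondecreasing, so the generalized inverses $T_l^\pm$ and $T_r^\pm$ make sense. I will prove the two equalities by establishing both inequalities from the defining infima.

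The easy direction is $T_r^\pm(y) \le T_l^\pm(y)$: since $T_l \le T_r$ pointwise, we have the inclusions $\{x:T_l(x)>y\}\subseteq\{x:T_r(x)>y\}$ and $\{x:T_l(x)\ge y\}\subseteq\{x:T_r(x)\ge y\}$, and taking infima flips the inclusion.

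For the other direction, fix $y$ and let $x$ be any point with $T_r(x)>y$. Since $T_r(x)=\inf_{z>x} T(z)$, every $z>x$ satisfies $T(z)\ge T_r(x)>y$. Then for each such $z$, picking any $w\in(x,z)$ gives $T_l(z)=\sup_{w'<z}T(w')\ge T(w)>y$, so $T_l(z)>y$ for all $z>x$. This forces $T_l^+(y)=\inf\{z:T_l(z)>y\}\le x$. Taking the infimum over all admissible $x$ yields $T_l^+(y)\le T_r^+(y)$. The argument for $T^-$ is identical after replacing the strict inequality $>y$ by $\ge y$ throughout: if $T_r(x)\ge y$, then $T(z)\ge y$ for all $z>x$, hence $T_l(z)\ge y$ for all $z>x$, giving $T_l^-(y)\le x$ and finally $T_l^-(y)\le T_r^-(y)$.

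There is no real obstacle here; the only subtlety worth verifying carefully is that the \emph{strict} inequality is preserved in the $T^+$ case, i.e.\ that $\inf_{z>x}T(z)>y$ implies $T(z)>y$ for every $z>x$ (not merely $\ge y$), which follows from the lower bound characterization of the infimum. Combining the two directions gives $T_l^+=T_r^+$ and $T_l^-=T_r^-$.
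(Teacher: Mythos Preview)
Your proof of \ref{version} is correct. The easy direction ($T_r^\pm\le T_l^\pm$ from the pointwise inequality $T_l\le T_r$) matches the paper's argument exactly. For the reverse inequality you take a genuinely different route: you show that \emph{every} $x$ with $T_r(x)>y$ satisfies $T_l^+(y)\le x$ (because $T_r(x)>y$ forces $T_l(z)>y$ for all $z>x$), and then pass to the infimum over such $x$. The paper instead picks a single point $x^\star\in M_r\setminus M_l$ (necessarily a jump point of $T$), verifies that $x^\star$ is a lower bound for both $M_l$ and $M_r$, and that $x^\star$ is a limit point of $M_l$, concluding $x^\star=\inf M_l=\inf M_r$.

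Your argument is more streamlined: it avoids the case distinction on whether $M_l=M_r$ and does not require isolating a specific witness point. The paper's approach, on the other hand, is more geometric in that it identifies the common infimum explicitly with the jump location of $T$, which foreshadows the ``jumps and plateaus'' picture developed later. Both handle the $T^-$ case by the obvious modification; your remark that the strict inequality survives because $\inf_{z>x}T(z)>y$ forces $T(z)>y$ for every $z>x$ is exactly the point that needs care, and you address it correctly.
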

\begin{proof}
    \ref{item1}, \ref{item:nondecreasing}, \ref{item:cadlag}, \ref{disc} and \ref{item:boundT-T+} follow immediately from elementary properties of the infimum as well as the monotonicity of $T$. We just prove part of \ref{item:cadlag} for illustration:  Let $A_0 = \{x:T(x)>y\}$ and $A_\eps = \{x:T(x)>y+\eps\}$. Then $A_0 = \bigcup_{\eps>0}A_\eps$ and thus 
    \begin{align*}
        T^+(y) &= \inf A_0 = \inf_{\eps>0}\inf A_\eps = \inf_{\eps>0}T^+(y+\eps)\\
        &= \lim_{\eps\searrow 0}T^+(y+\eps).
    \end{align*}

    Regarding \ref{item:relations}: 
    \begin{enumerate}
            \item[\ref{item2dot6}] Follows directly from definition and the infimum: Let $T(x) \geq y$, then $x\in A:= \{\xi\in\R: T(\xi) \geq y\}$, i.e. $T^-(y) = \inf A \leq x$.
            \item[\ref{item2dot5}] Follows directly from definition and the infimum: Let $T(x) > y$, then $x \in A:= \{\xi\in \R: T(\xi) > y\}$, i.e. $T^+(y) = \inf A \leq x$
            \item[\ref{itemT-T}] This follows from \ref{item2dot6}, by setting $y=T(x)$.
            \item[\ref{item2dot65}] We assume that $y >  T(x)$. Thus for any $\xi\in \R$ with the property that $T(\xi)\geq y$, we have $T(\xi)>T(x)$, i.e. $\xi > x$ by monotonicity of $T$. Since $A\subset B$ implies $\inf A \geq \inf B$, this shows $T^-(y) = \inf \{\xi\in \R: T(\xi) \geq y\} \geq \inf \{\xi\in \R: \xi > x\} = x$.
            \item[\ref{item2dot75}]  We assume that $y\geq T(x)$. Thus for any $\xi\in \R$ with the property that $T(\xi)>y$, we have $T(\xi)>T(x)$, i.e. $\xi > x$ by monotonicity of $T$. Since $A\subset B$ implies $\inf A \geq \inf B$, this shows $T^+(y) = \inf \{\xi\in \R: T(\xi) > y\} \geq \inf \{\xi\in \R: \xi > x\} = x$.
            \item[\ref{itemT+T}] This follows from \ref{item2dot75}, by setting $y=T(x)$
            \item[\ref{item3}] Follows from \ref{itemT-T} and \ref{itemT+T} .
            \item[\ref{item4}] Clearly, $x:=T^+(y)-\eps < T^+(y)$, thus by \ref{item2dot5}, $T(x) = T(T^+(y)-\eps) \leq y$, which shows the statement via $\eps \to 0$.
    \end{enumerate}
     Regarding \ref{item:relations_rightcont}: 
    \begin{enumerate}
            \item[\ref{item2dot8}] We prove the equivalent characterization: Let $x\geq T^-(y)$. We choose a sequence $\xi_n\searrow x$, with $\xi_n > x $, hence $\xi_n > T^-(y)$, and thus $T(\xi_n) \geq y$ (by \ref{item2dot65}). Using right continuity of $T$, we see that $T(x) = \lim_n T(\xi_n)\geq y$.
            \item[\ref{item2dot9}] This follows from \ref{item2dot8} and \ref{item:boundT-T+}
            \item[\ref{equiv_rightcont}] is a direct implication of \ref{item2dot6} and \ref{item2dot8}
    \end{enumerate}
    
    \ref{itemTT+-} follows from \ref{item2dot8} and \ref{item2dot9} by setting $x=T^-(y)$ and $T^+(y)$, respectively.
    
    \ref{item:relations_leftcont} is proven quite similarly to \ref{item:relations_rightcont}.
    
    \ref{itemTT+-2} is proven quite similarly to \ref{itemTT+-}.

    Regarding \ref{item:TT+=x}: Assuming continuity of $T$ at $T^+(y)$ or $T^-(y)$, respectively, the statement follows from an application of \Cref{itemTT+-} and \Cref{itemTT+-2}.
    
    \ref{equal2} is proven as follows. Since $T(x)=y$ for all $x\in (x_1,x_2)$, \ref{item2dot6} and \ref{item2dot75} imply that $T^-(y) \leq x$ for all $x\in I$ as well as $T^+(y) \geq x$ for all $x\in I$, i.e. by taking the limit $x\to x_{1/2}$, we have $T^-(y)\leq x_1$ and $T^+(y) \geq x_2$. Thus, for any $x\in I$, we get $T^+(T(x)) = T^+(y) \geq x_2 > x > x_1 \geq T^-(y) = T^-(T(x))$.

    Lastly we prove \ref{version}. Let $T$ be an arbitrary non-decreasing function, and $T_r$ as above. We fix an arbitray $y$ and set $M_l=\{x:T(x-) > y\}$ and $M_r=\{x: T(x+)>y\}$, and thus $T_l^+(y) = \inf M_l$ and $T_r^+(y) = \inf M_r$. By elementary inclusion $M_l\subseteq M_r$, i.e. $T_l^+(y)\geq T_r^+(y)$. It remains to show the opposite inequality. If $M_l = M_r$, then the statement follows directly. Otherwise, let $x^\star \in M_r\setminus M_l$ which means that $T(x^\star-) \leq y$ and $T(x^\star+)>y$. We will now show that $x^\star$ is a lower bound for $M_r$. Indeed, let $x < x^\star$, then $T(x+)\leq T(x^\star-)\leq y$, i.e. $x\not\in M_r$. Since $M_l\subseteq M_r$, this also shows that $x^\star$ is a lower bound for $M_l$. Now we show that $x^\star \in \overline{M_l}$. Indeed, if we set $\eps>0$, then $T((x^\star + \eps)-)\geq T(x^\star+)> y$, i.e. for all $\eps>0$, $x_\eps := x^\star + \eps \in M_l$. Since $x_\eps \searrow x^\star$,  $x^\star \in \overline{M_l}$. By inclusion, $x^\star \in \overline{M_r}$, as well. $x^\star$ being a lower bound and a limit point proves that $x^\star = \inf M_l$, and $x^\star = \inf M_r$, i.e. $x^\star = T_l^+(y) = T_r^+(y)$. The version of with $T^-$ is proven analogously.

\end{proof}

\begin{remark}
    \cite[Section 3.2]{embrechts2013note} comments on the fact that there are errors in previous work on generalized inverses and constructs a series of four statements in published manuscripts with counterexamples for why they are wrong, but without providing a way of resolving these contradictions. This manuscript does: \ref{itemTT+-}, \ref{itemTT+-2}, and \ref{item:TT+=x} give the exact conditions for what we can say about $T\circ T^+$ as well as $T\circ T^-$. In particular, the counterexamples from \cite{embrechts2013note} do not apply here, since $\infty$ is not a point of (right- or left-)continuity of $T$ (it does not even make sense to think about this).
\end{remark}

There is one particular application that is especially interesting in practice, which is the inverse sampling method for univariate random variables. This is a well-known fact, but it is an elementary direct result of the previous lemma.

\begin{corollary}
    Let $(\Omega,\mathcal B, \mathbb P)$ be a probability space and $X:\Omega\to \R$ a random variable with $F_X$ being its cumulative distribution function. Then the push-forward of a uniform random variable $U([0,1])$ under the generalized inverse $F_X^-$ is the law of $X$. This means that we can generate independent samples $x_i\sim U([0,1])$, plug them into $F_X^-$, and the $\{F_X^-(x_i)\}$ will be samples from $X$.
    \begin{proof}
        Since $F_X$ is right-continuous, we know (from  \Cref{lem:generalizedinv_prop}\ref{equiv_rightcont}) that $y \leq F_X(x)$ if and only if $F_X^-(y) \leq x$. We compute the cumulative distribution function of $F_X^-(U)$:
        \begin{align*}
            \mathbb P(\F_X^-(U)\leq \lambda) &= \mathbb P(U\leq F_X(\lambda)) \\
            &= F_X(\lambda),
        \end{align*}
        since the cumulative distribution function of $U$ is given by $\mathbb P(U\leq r) = r$ (for $r\in [0,1]$). This means that the law of $F_X^-(U)$ is identical to the law of $X$.
    \end{proof}
\end{corollary}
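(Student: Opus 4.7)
The plan is to transform the event $\{F_X^-(U)\leq \lambda\}$ into an event about $U$ alone, and then invoke the explicit CDF of the uniform distribution. First, I would observe that any cumulative distribution function $F_X$ is right-continuous everywhere on $\R$, so \Cref{lem:generalizedinv_prop}\ref{equiv_rightcont} applies at every $\lambda \in \R$: it yields the equivalence $y \leq F_X(\lambda) \iff F_X^-(y) \leq \lambda$ for all $y \in \R$. Applying this pointwise with $y = U(\omega)$ gives the set equality $\{F_X^-(U) \leq \lambda\} = \{U \leq F_X(\lambda)\}$.

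Next, since $F_X(\lambda) \in [0,1]$ and $U \sim U([0,1])$, I would compute $\mathbb P(U \leq F_X(\lambda)) = F_X(\lambda)$ directly from the uniform CDF. Chaining the two steps gives $\mathbb P(F_X^-(U)\leq \lambda) = F_X(\lambda)$ for every $\lambda \in \R$, and since the cumulative distribution function characterises the law of a real-valued random variable uniquely, this shows that $F_X^-(U)$ and $X$ share the same distribution.

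The only subtlety worth flagging is that $F_X^-$ is a priori $\bar\R$-valued, but $F_X(x)\to 0$ as $x\to -\infty$ and $F_X(x)\to 1$ as $x\to +\infty$ force $F_X^-(y) \in \R$ for every $y \in (0,1)$, so $F_X^-(U) \in \R$ almost surely and the computation above is unaffected. There is no real obstacle here: once one recognises that cumulative distribution functions are precisely the right-continuous non-decreasing functions for which the global equivalence \ref{equiv_rightcont} is available, the earlier lemma does all of the work.
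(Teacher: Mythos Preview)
Your proof is correct and follows essentially the same approach as the paper: invoke \Cref{lem:generalizedinv_prop}\ref{equiv_rightcont} via right-continuity of $F_X$ to rewrite $\{F_X^-(U)\leq\lambda\}$ as $\{U\leq F_X(\lambda)\}$, then read off the result from the uniform CDF. Your additional remarks on the set equality and on $F_X^-$ being almost surely finite are welcome clarifications but do not alter the strategy.
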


\section{Jumps and Plateaus}

The following two lemmata are an adaptation of \cite[Lemma 2]{vicenik1999note}, and can also be found in \cite[Proposition 4.3]{de2015study}, but because the former is concerned with $T^-$ instead of $T^+$, discusses ``third order terms'' like $T(T^+(T(x))) > T(x)$ instead of ``second order terms'' like $T^+(T(x)) > x$, and does not prove maximality of the half-open intervals involved, and the latter has a typo (and refers to the proof to the former instead of providing a direct proof), we give a proof of this statement for completeness' sake. Additionally, we fix an error in the literature (see remark \ref{rem:fixedmistake} below).

The following statements relate plateaus and jumps of $T$ and $T^\pm$ to one another. For a visualization of these connections, see \cite{de2015study,kampke2015generalized}.

\begin{lemma}\label{main_lemma_cliff} Let $T$ be nondecreasing.
\begin{myenum}
\item \label{Necessity_cliff}If 
\begin{equation}
T^+(y) > T^-(y)
    \end{equation} then for any $x\in (T^-(y),T^+(y)) = I$, both
\begin{align}
    T(x) &=  y \text{ and }\label{first_necessity_cliff}\\
    T^+(T(x)) &> x > T^-(T(x)) \label{second_necessity_cliff}
\end{align}
and there is no greater interval than $I$ of the same type such that \eqref{first_necessity_cliff} holds.

\item  \label{Sufficiency_cliff} Conversely, if either 
\begin{myenum}
\item \label{first_sufficiency_cliff}there is a proper interval $I = (x_1,x_2)$ such that for all $x_0\in I$, $T(x_0) = y$ or
\item \label{second_sufficiency_cliff}for some $x_0$, we have $T^+(T(x_0)) > x_0$, then with $y := T(x_0)$, or 
\item \label{third_sufficiency_cliff}for some $x_0$, we have $T^-(T(x_0)) < x_0$, then with $y := T(x_0)$, 
\end{myenum}
then
    \begin{equation}\label{sufficiency_result_cliff}
        T^+(y) > T^-(y)
    \end{equation}
\item \label{equiv_cliff} For any given $x$, the following two statements are equivalent:
\begin{itemize}
    \item $T\equiv y$ on a proper interval $(x_1,x_2)$.
    \item $T^+(y) > T^-(y)$.
\end{itemize}
\end{myenum}
\end{lemma}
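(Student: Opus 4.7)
My plan is to prove the three parts in order, leveraging the items of \Cref{lem:generalizedinv_prop} that are already available.

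For part \ref{Necessity_cliff}, I would fix $y$ with $T^+(y) > T^-(y)$, pick an arbitrary $x \in I = (T^-(y), T^+(y))$, and pin down $T(x)$ from two sides. The upper bound $T(x) \leq y$ follows from \ref{item2dot5} applied to $x < T^+(y)$, and the lower bound $T(x) \geq y$ follows from the contrapositive in \ref{item2dot6} applied to $x > T^-(y)$ (otherwise $T(x) < y$ would force $T^-(y) \geq x$). So $T(x) = y$, and $T^+(T(x)) > x > T^-(T(x))$ is just a restatement of $x \in I$. For maximality, I would assume a strictly larger open interval $I' \supsetneq I$ on which $T \equiv y$; then $I'$ must contain a point either strictly below $T^-(y)$ or strictly above $T^+(y)$. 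The first case contradicts \ref{item2dot6} (whose contrapositive gives $T(x') < y$ for $x' < T^-(y)$) and the second case contradicts \ref{item2dot75} (whose contrapositive gives $T(x') > y$ for $x' > T^+(y)$).

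For part \ref{Sufficiency_cliff}, each of the three hypotheses collapses into the conclusion by citing one item of \Cref{lem:generalizedinv_prop}. In case \ref{first_sufficiency_cliff}, item \ref{equal2} applied to any $x \in (x_1,x_2)$ immediately yields $T^+(y) > x > T^-(y)$. In case \ref{second_sufficiency_cliff}, combining the hypothesis $T^+(T(x_0)) > x_0$ with \ref{itemT-T} (which gives $T^-(T(x_0)) \leq x_0$) produces $T^-(y) \leq x_0 < T^+(y)$. Case \ref{third_sufficiency_cliff} is symmetric: the hypothesis $T^-(T(x_0)) < x_0$ together with \ref{itemT+T} yields $T^-(y) < x_0 \leq T^+(y)$.

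Finally, part \ref{equiv_cliff} is just a packaging of the previous two: the forward direction is sufficiency case \ref{first_sufficiency_cliff}, and the backward direction uses necessity, observing that $I = (T^-(y), T^+(y))$ is a proper open interval on which \ref{first_necessity_cliff} gives $T \equiv y$. I expect no genuine obstacle: the only step requiring care is the maximality claim in part \ref{Necessity_cliff}, which relies on selecting the correct strict-inequality items \ref{item2dot6} and \ref{item2dot75} rather than their weak counterparts.
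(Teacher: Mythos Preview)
Your proof is correct and follows essentially the same route as the paper's: identical use of \ref{item2dot5}/\ref{item2dot65} to pin $T(x)=y$, \ref{item2dot6}/\ref{item2dot75} for maximality, \ref{equal2} and \ref{itemT-T}/\ref{itemT+T} for the sufficiency cases, and the same packaging for \ref{equiv_cliff}. The only slip is a mislabelling: the lower bound $T(x)\geq y$ (``otherwise $T(x)<y$ would force $T^-(y)\geq x$'') is the content of \ref{item2dot65}, not the contrapositive of \ref{item2dot6}.
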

\begin{proof}
    We start by proving \ref{Necessity_cliff}. Let $x\in (T^-(y),T^+(y))$, then $T^-(y)< x < T^+(y)$, i.e. by an application of \Cref{lem:generalizedinv_prop}\ref{item2dot5} and \ref{item2dot65}, $y \leq T(x) \leq y$, which shows that $T$ is indeed constant on the interval $(T^-(y),T^+(y))$. Now we show maximality. If $x > T^+(y)$, then $T(x) > y$ by \Cref{lem:generalizedinv_prop}\ref{item2dot75}. Similarly, if $x < T^-(y)$, then $T(x) < y$ by \ref{item2dot6}. This shows that there is no larger half-open interval $[a,b)$ on which $T\equiv y$. The relation \eqref{second_necessity_cliff} is a direct implication of \eqref{first_necessity_cliff} (which we just proved to be true) and lemma  \Cref{lem:generalizedinv_prop}\ref{equal2}.

    Regarding \ref{Sufficiency_cliff}: We prove that \ref{first_sufficiency_cliff} implies \eqref{sufficiency_result_cliff}. This is a direct consequence of \Cref{lem:generalizedinv_prop}\ref{equal2}:

    Now  \ref{second_sufficiency_cliff} also implies \eqref{sufficiency_result_cliff}: By \Cref{lem:generalizedinv_prop}\ref{itemT-T}, $T^-(T(x_0)) \leq x_0 < T^+(T(x_0))$. Similarly for \ref{third_sufficiency_cliff} (via \Cref{lem:generalizedinv_prop}\ref{itemT+T}).

     \ref{equiv_cliff} follows from a combination from the two other statements.
\end{proof}

\begin{lemma}\label{main_lemma_plateau}Let $T$ be nondecreasing.
\begin{myenum}
\item \label{Necessity_plateau}If 
\begin{equation}
T(x+) > T(x-)
    \end{equation} then for any $y\in (T(x-),T(x+)) = I$, both
\begin{align}
    T^+(y) &=  x = T^-(y)\label{first_necessity_plateau}
\end{align}
and there is no greater interval than $I$ of the same type such that either equality in \eqref{first_necessity_plateau} holds.

\item  \label{Sufficiency_plateau}Conversely, if there is a proper interval $I = (y_1,y_2)$ such that either  
\begin{myenum}
\item \label{first_sufficiency_plateau}for all $y\in I$, $T^+(y) = x$, or
\item \label{second_sufficiency_plateau}for all $y\in I$, $T^-(y) = x$, then
\end{myenum}
    \begin{equation}\label{sufficiency_result}
        T(x+) > T(x-)
    \end{equation}
    \item \label{equiv_plateau} For any given $x$, the following two statements are equivalent:
\begin{itemize}
    \item $T(x+) > T(x-)$.
    \item $T^+\equiv y \equiv T^-$ on a proper interval $(y_1,y_2)$.
\end{itemize}
\end{myenum}
\end{lemma}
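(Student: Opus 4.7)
The plan is to extract everything from a clean monotonicity observation: for $\xi<x$ we have $T(\xi)\le T(x-)$, and for $\xi>x$ we have $T(\xi)\ge T(x+)$. Thus the two sets defining $T^\pm(y)$ split cleanly at $x$ whenever $y$ lies strictly between $T(x-)$ and $T(x+)$.

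For part \ref{Necessity_plateau}, I would fix $y\in I=(T(x-),T(x+))$ and compute the two infima directly. Every $\xi<x$ satisfies $T(\xi)\le T(x-)<y$, so $\xi$ is excluded from both $\{T(\xi)>y\}$ and $\{T(\xi)\ge y\}$; every $\xi>x$ satisfies $T(\xi)\ge T(x+)>y$, so $\xi$ lies in both sets. This pins $T^+(y)=T^-(y)=x$. For the maximality claim, I would then check that any $y$ outside $[T(x-),T(x+)]$ breaks both equalities: if $y>T(x+)$, the definition of $T(x+)$ as a one-sided limit produces some $\xi>x$ with $T(\xi)<y$, so $T^\pm(y)\ge\xi>x$; the case $y<T(x-)$ is symmetric and yields $T^\pm(y)\le\xi<x$.

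For part \ref{Sufficiency_plateau}, the plan is to reverse the argument using the earlier lemma. Suppose $T^+\equiv x$ on a proper interval $(y_1,y_2)$ and pick any $y_3<y_4$ in it. \Cref{lem:generalizedinv_prop}\ref{item4} applied at $y_3$ yields $T(x-)=T(T^+(y_3)-)\le y_3$, while the contrapositive form of \ref{item2dot75} applied at $y_4$ says that $\xi>T^+(y_4)=x$ implies $T(\xi)>y_4$, so letting $\xi\searrow x$ gives $T(x+)\ge y_4$. Combining, $T(x-)\le y_3<y_4\le T(x+)$ as required. The case $T^-\equiv x$ is handled analogously, using \ref{item2dot6} to supply $T(x-)\le y_3$ (from $\xi<T^-(y_3)=x$ forcing $T(\xi)<y_3$) and \ref{item2dot65} to supply $T(x+)\ge y_4$.

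Part \ref{equiv_plateau} will fall out by combining the previous two, taking $y_1:=T(x-)$ and $y_2:=T(x+)$ in the forward direction. I do not expect a genuine obstacle; the most delicate point is keeping track, in the sufficiency argument, that the two ``sides'' of the infimum give the two inequalities $T(x-)\le y$ and $T(x+)\ge y$ separately, which is why one must pick \emph{two} distinct values $y_3<y_4$ in $(y_1,y_2)$ rather than a single one.
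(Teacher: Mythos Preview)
Your proposal is correct and follows essentially the same approach as the paper's own proof: both arguments pin $T^\pm(y)$ to $x$ via the monotonicity bounds $T(\xi)\le T(x-)$ for $\xi<x$ and $T(\xi)\ge T(x+)$ for $\xi>x$, and both obtain the sufficiency direction from the contrapositive implications in \Cref{lem:generalizedinv_prop}\ref{item:relations}. The only cosmetic differences are that the paper phrases part~\ref{Necessity_plateau} as an $\eps$-squeeze and, in part~\ref{Sufficiency_plateau}, quantifies over all $y\in(y_1,y_2)$ rather than selecting two witnesses $y_3<y_4$; your use of \Cref{lem:generalizedinv_prop}\ref{item4} to get $T(x-)\le y_3$ is in fact slightly cleaner than the paper's corresponding step.
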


\begin{proof}
    We start by proving \ref{Necessity_plateau}. Let $T(x+) > T(x-)$. Then for any $y\in (T(x-),T(x+))$, we have that for any $\eps > 0$, $T(x-\eps) < y < T(x+\eps)$, i.e. (using again the relevant statements in \cref{lem:generalizedinv_prop}) $x-\eps \leq T^-(y)$ and $T^+(y)\leq x+\eps$. By letting $\eps\to 0$, we obtain the statement. Maximality is proven similarly: Take $y > T(x+)$, i.e. there exists $\eps>0$ such that $T(x+\eps) < y$, and thus $T^-(y) \geq x+\eps$, which shows that $y$ is not an element of the set on which $T^-\equiv x$. Since $T^+ \geq T^-$, this also proves that $y$ is not an element of the set on which $T^+\equiv x$. Maximality from below is proven in the same way.

    We now prove \ref{Sufficiency_plateau}, assuming \ref{first_sufficiency_plateau}, i.e. $T^+(y) \equiv x$ on $(y_1,y_2)$. For any $\eps > 0$, $T^+(y) < x+\eps$, i.e. $T(x+\eps) > y$ for all $y\in (y_1,y_2)$. Thus, $T(x+\eps) \geq y_2$ and $T(x+) \geq y_2$. In the same way we prove $T(x-) \leq y_1$: For any $\eps > 0$, $x-\eps < T^-(y)$, i.e. $T(x-\eps) < y$ for all $y\in (y_1,y_2)$. Thus, $T(x-\eps)\leq y_1$ and $T(x-)\leq y_1$. All in all, this proves the statement since $T(x-) \leq y_1 < y_2\leq T(x+)$. The statement \ref{second_sufficiency_plateau} $\Rightarrow$  \eqref{sufficiency_result} is proven in a similar fashion.
   
     \ref{equiv_plateau} follows from a combination from the two other statements.
\end{proof}

\begin{remark}\label{rem:fixedmistake}
     \Cref{main_lemma_plateau}\ref{equiv_plateau} and \Cref{main_lemma_cliff}\ref{equiv_cliff} are a correction of \cite[Proposition 4.3]{de2015study}. In fact, it is not true that $T(T^+(y)) > T(T^+(y)-)$ implies $T(T^+(y)) > y$ or that $T^+(T(x)) > T^+(T(x)-)$ implies $T^+(T(x)) > x$, as Figure \ref{fig:counterex} shows: We set $x=x_2$. Then $T(x)=y$, and $T^+(T(x)) = T^+(y) = x_2 = x$, i.e. the first condition in \cite[Proposition 4.3, 1.)]{de2015study} holds. But, $T(x)=y\in H(T)$, since $y$ is a plateau of $T$. This is in contradiction to the statement of \cite[Proposition 4.3, 1.)]{de2015study}. 
     \begin{figure}
         \centering
         \includegraphics[width=\textwidth]{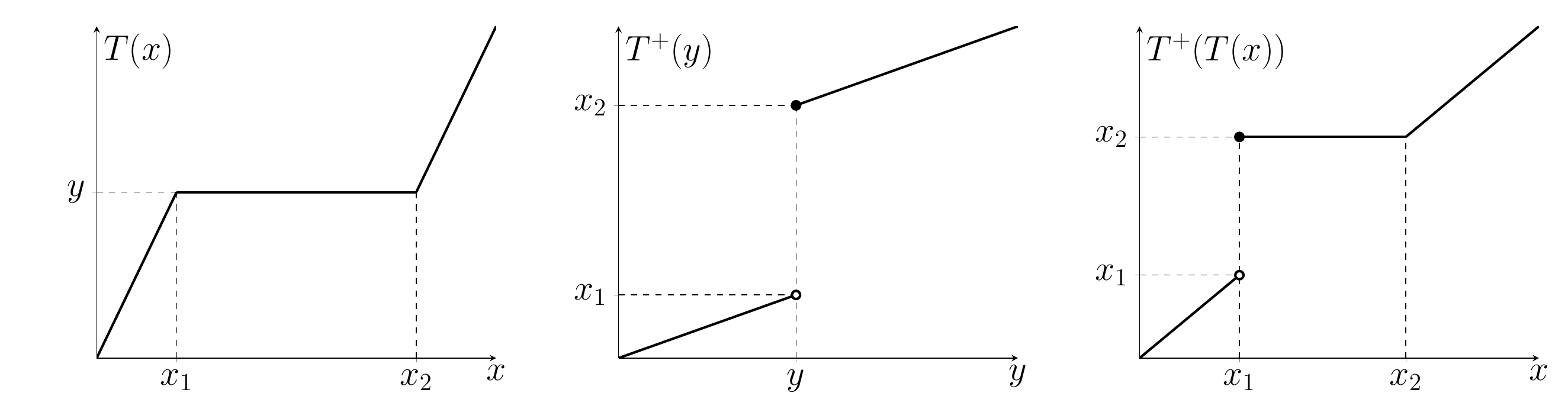}
         \caption{Counterexample: $T^+(T(x)) > T^+(T(x)-)$ but $T^+(T(x)) = x$.}
         \label{fig:counterex}
     \end{figure}
\end{remark}

\section{Inversion Statements}

The remaining statements classify exactly what can be said about $T\circ T^\pm$ and $T^\pm \circ T$ under suitable continuity assumptions.

\begin{lemma}\label{lem:formTT+}
    Let $X = \{x_i\}$ be the (ordered) list of all discontinuities of $T$, where we denote $y_i^+ = T(x_i+)$ and $y_i^- = T(x_i-)$. Then 
    \[T(T^+(y)) = \begin{cases}T(x_i),&\text{ for } y\in(y_i^-,y_i^+)\\
    y,&\text{ for } y \not\in \bigcup_i [y_i^-,y_i^+]
    \end{cases}\]
    \[T(T^-(y)) = \begin{cases}T(x_i),&\text{ for } y\in(y_i^-,y_i^+)\\
    y,&\text{ for } y \not\in \bigcup_i [y_i^-,y_i^+]
    \end{cases}\]
     Let $Y = \{y_i\}$ be the (ordered) list of all discontinuities of $T^\pm$, where we denote $x_i^+ = T^+(y_i)$ and $x_i^- = T^-(y_i)$. Then 
     \[ T^+(T(x)) = \begin{cases}x_i^+=T^+(y_i),&\text{ for } x\in(x_i^-,x_i^+)\\
    x,&\text{ for } x \not\in \bigcup_i [x_i^-,x_i^+]
    \end{cases}\]
    \[
    T^-(T(x)) = \begin{cases}x_i^-=T^-(y_i),&\text{ for } x\in(x_i^-,x_i^+)\\
    x,&\text{ for } x \not\in \bigcup_i [x_i^-,x_i^+]
    \end{cases}   \]
\end{lemma}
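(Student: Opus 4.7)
The proof has four formulas but they split naturally into two pairs. For $T\circ T^\pm$ the task is (i) to identify $T^\pm(y)$ explicitly when $y$ sits in a jump gap $(y_i^-,y_i^+)$ of $T$, and (ii) to recognize that outside these gaps $T$ is continuous at $T^\pm(y)$, so that \Cref{lem:generalizedinv_prop}\ref{item:TT+=x} applies. For $T^\pm\circ T$ the task is to translate the dichotomy ``$x$ lies in a plateau of $T$ or not'' into the dichotomy ``$T^+(T(x))\neq T^-(T(x))$ or $=x$,'' using \Cref{main_lemma_cliff}.

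\textbf{Part 1: formulas for $T(T^\pm(y))$.} First, fix $y\in(y_i^-,y_i^+)$. For $x<x_i$, monotonicity gives $T(x)\leq T(x_i-)=y_i^-<y$, so $x$ lies in neither defining set for $T^+(y)$ nor $T^-(y)$. For $x>x_i$, $T(x)\geq T(x_i+)=y_i^+>y$, so $x$ lies in both. Hence $T^+(y)=T^-(y)=x_i$, giving $T(T^\pm(y))=T(x_i)$. Second, fix $y\notin\bigcup_i[y_i^-,y_i^+]$; I will show $T$ is continuous at both $T^+(y)$ and $T^-(y)$ and then invoke \Cref{lem:generalizedinv_prop}\ref{item:TT+=x}. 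Suppose for contradiction $T$ has a jump at $x^\ast:=T^+(y)=x_j$. From $\inf\{x:T(x)>y\}=x_j$ I can extract $T(x)\leq y$ for all $x<x_j$ (hence $T(x_j-)\leq y$), and points arbitrarily close to $x_j$ from above with $T$-value $>y$ (hence $T(x_j+)\geq y$), forcing $y\in[y_j^-,y_j^+]$, a contradiction. The same argument works for $T^-(y)$.

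\textbf{Part 2: formulas for $T^\pm(T(x))$.} Recall that, by \Cref{main_lemma_cliff}\ref{equiv_cliff}, the discontinuities $y_i$ of $T^\pm$ correspond bijectively to maximal proper plateau intervals $(x_i^-,x_i^+)$ of $T$ on which $T\equiv y_i$, with $x_i^\pm=T^\pm(y_i)$. For $x\in(x_i^-,x_i^+)$, $T(x)=y_i$, so $T^\pm(T(x))=T^\pm(y_i)=x_i^\pm$, which handles the first case. For $x\notin\bigcup_i[x_i^-,x_i^+]$, \Cref{lem:generalizedinv_prop}\ref{itemT-T} and \ref{itemT+T} give $T^-(T(x))\leq x\leq T^+(T(x))$; I claim equality. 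If $T^+(T(x))>T^-(T(x))$, then \Cref{main_lemma_cliff}\ref{Necessity_cliff} applied with $y:=T(x)$ shows that $T$ is constant on the maximal proper interval $(T^-(T(x)),T^+(T(x)))$, so $y$ is a discontinuity of $T^\pm$, i.e.\ $y=y_j$ and $T^\pm(y)=x_j^\pm$; then $x\in(x_j^-,x_j^+)$, contradicting our assumption. Equality then combines with \Cref{lem:generalizedinv_prop}\ref{item3} to yield $T^+(T(x))=T^-(T(x))=x$.

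\textbf{Main obstacle.} There is no deep obstacle left once \Cref{main_lemma_cliff} and \Cref{lem:generalizedinv_prop}\ref{item:TT+=x}, \ref{item3} are in place; the bookkeeping nuisance is matching the \emph{open} interval $(y_i^-,y_i^+)$ used in the ``jump'' case against the \emph{closed} complement $\bigcup_i[y_i^-,y_i^+]$ used in the ``continuity'' case. The lemma deliberately leaves the endpoints $y=y_i^\pm$ uncovered (their value of $T(T^\pm(y))$ depends on where exactly $T(x_i)$ lies in $[y_i^-,y_i^+]$ and on $\pm$), which is what allows the case analysis above to close cleanly without further subcases.
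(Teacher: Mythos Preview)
Your argument is correct and follows essentially the same route as the paper: \Cref{main_lemma_plateau}/\Cref{main_lemma_cliff} for the ``inside the gap'' cases, continuity of $T$ (resp.\ $T^\pm$) at $T^\pm(y)$ (resp.\ $T(x)$) combined with \Cref{lem:generalizedinv_prop}\ref{item:TT+=x} and \ref{item3} for the ``outside'' cases. The only cosmetic differences are that in Part~1 you re-derive the content of \Cref{main_lemma_plateau}\ref{Necessity_plateau} by hand rather than citing it, and in Part~2 your contradiction yields only $x\in[x_j^-,x_j^+]$ (not the open interval you wrote), which is still enough since the hypothesis excludes the closed union.
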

\begin{proof}
    We show the characterization for $T\circ T^+$: Let first $y\in (y_i^-,y_i^+)$. Then \Cref{main_lemma_plateau}\Cref{Necessity_plateau} proves $T^+(y) = x_i = T^-(y)$, i.e. $T(T^+(y)) = T(x_i) = T(T^-(y))$. If $y\not \in \bigcup_i [y_i^-,y_i^+]$, then $T^+(y)\not\in X$, or otherwise $T^+(y) = x_j$ for some $j$, which would be in contradiction to the maximality of the set $(y_j^-, y_j^+)$ in  \Cref{main_lemma_plateau}\Cref{Necessity_plateau}. Similarly, $T^-(y)\not\in X$. Thus $T$ is continuous at $T^+(y)$ and at $T^-(y)$, i.e. $T(T^\pm(y)) = y$ by virtue of \Cref{lem:generalizedinv_prop}\Cref{item:TT+=x}.

     Regarding $T^+\circ T$: If $i < j$, $y_i < y_j$ and thus $x_i^+ = T^+(y_i) \leq T^-(y_j) = x_j^-$, so $x_i^+ < x_j^-$ and thus the intervals $(x_i^-,x_i^+)$ are disjoint from another. Let $x\in(x_i^-,x_i^+) = (T^-(y_i),T^+(y_i))$. Then by \Cref{main_lemma_cliff}\ref{Necessity_cliff}, $T^+(T(x))=T^+(y_i) = x_i^+$ and $T^-(T(x)) = T^-(y_i) = x_i^-$. On the other hand, let $x\not\in \bigcup_i [x_i^-,x_i^+]$. Then $T(x)\not\in Y$, because otherwise $T(x)=y_j$ for some $j$, and then $(x_j^-,x_j^+)= (T^+(y_i-),T^+(y_i))$ would  not be the greatest interval possible, in contradiction to \Cref{main_lemma_cliff}\ref{Necessity_cliff}. This means that $T^+$ is continuous at $T(x)$, and thus (because $T^-$ and $T^+$ are left- and right-continuous versions of one another, see \Cref{lem:generalizedinv_prop}\ref{item:cadlag}) $T^+(T(x)) = T^-(T(x))$. By \Cref{lem:generalizedinv_prop}\ref{item3}, $T^+(T(x)) = x = T^-(T(x))$. 
\end{proof}
\begin{remark}
    Note that there is no statement about the edge cases $T(T^\pm(y_i^\pm))$ and $T^\pm(T(x_i^\pm))$. In particular, while $T\circ T^+ = T\circ T^-$ on the two sets considered in the statement of \Cref{lem:formTT+}, it is entirely possible that, e.g., $T(T^+(y_i^+))\neq T(T^-(y_i^+))$. The remaining values depend on the type of continuity of $T$ at those edge points. Assuming global (left- or right-)continuity of $T$ allows us to precisely characterize the invertibility interaction between $T$ and $T^\pm$, and close the gaps in Lemma \ref{lem:formTT+}
\end{remark}

\begin{lemma}\label{lem:TTpmTpmT}
    Let $T$ be nondecreasing and \textbf{continuous from the right}. We denote by $X = \{x_i\}$ the (ordered) list of all discontinuities of $T$, i.e. $y_i^+ := T(x_i)>T(x_i-) =: y_i^-$ and $T(x) = T(x-)$ for $x\not\in X$. We denote by $Y = \{y_i\}$ the (ordered) list of plateaus of $T^\pm$, i.e. for each $y_i$ there exists a proper (maximal in the set of half-open intervals) interval $I_i = [x_i^-, x_i^+)$ such that $T(x)\equiv y_i$ for all $x\in I_i$. Then 
    \begin{align*}
    T(T^+(y)) &= \begin{cases}y_i^+,&\text{ for } y\in[y_i^-,y_i^+)\\
    y,& \text{ else }
    \end{cases}\\
    T^+(T(x)) &= \begin{cases}x_i^+,&\text{ for } x\in[x_i^-,x_i^+)\\
    x,& \text{ else }
    \end{cases}
    \end{align*}
    
    Let $T$ be nondecreasing and \textbf{continuous from the left}. We denote by $X = \{x_i\}$ the (ordered) list of all discontinuities of $T$, i.e. $y_i^+ := T(x_i+)>T(x_i) =: y_i^-$ and $T(x+) = T(x)$ for $x\not\in X$. We denote by $Y = \{y_i\}$ the (ordered) list of plateaus of $T^\pm$, i.e. for each $y_i$ there exists a proper (maximal in the set of half-open intervals) interval $I_i = (x_i^-, x_i^+]$ such that $T(x)\equiv y_i$ for all $x\in I_i$. Then 
    \begin{align*}
    T(T^-(y)) &= \begin{cases}y_i^-,&\text{ for } y\in(y_i^-,y_i^+]\\
    y,& \text{ else }
    \end{cases}\\
    T^-(T(x)) &= \begin{cases}x_i^-,&\text{ for } x\in(x_i^-,x_i^+]\\
    x,& \text{ else }
    \end{cases}
    \end{align*}
    \begin{proof}
        This follows directly from the fact that the concatenation of right-continuous, nondecreasing functions is again right-continuous and non-decreasing (and similarly for left-continuous functions), so we can fill the gaps in our knowledge of, say, $T\circ T^+$ by taking limits from the right, etc. 
    \end{proof}
\end{lemma}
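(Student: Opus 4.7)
My plan is to start from Lemma \ref{lem:formTT+}, which already pins down the target formulas on the open intervals $(y_i^-,y_i^+)$ and $(x_i^-,x_i^+)$ as well as on the complement of $\bigcup_i[y_i^-,y_i^+]$ and of $\bigcup_i[x_i^-,x_i^+]$; under the right-continuity hypothesis on $T$ the identity $T(x_i)=y_i^+$ converts the ``$T(x_i)$'' there into the ``$y_i^+$'' here. So all that remains is to determine the values at the endpoints $y=y_i^-$, $y=y_i^+$, $x=x_i^-$, $x=x_i^+$, which are exactly the points at which the half-open intervals in the statement differ from the open intervals supplied by Lemma \ref{lem:formTT+}.

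My first step is to observe that $T\circ T^+$ and $T^+\circ T$ are themselves nondecreasing and right-continuous. Both factors are nondecreasing (Lemma \ref{lem:generalizedinv_prop}\ref{item:nondecreasing}) and right-continuous (by hypothesis on $T$ and by Lemma \ref{lem:generalizedinv_prop}\ref{item:cadlag} for $T^+$), and a short $\varepsilon$-argument shows that a composition of nondecreasing right-continuous real functions is again right-continuous. Hence the value of either composition at any point equals the limit of its values as the argument is approached from the right, and those nearby values sit in regions already covered by Lemma \ref{lem:formTT+}.

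Second, I would evaluate those right-limits at each of the four endpoint types. At $y=y_i^-$, any sufficiently small right-perturbation lies in $(y_i^-,y_i^+)$, where $T(T^+(\cdot))=y_i^+$, so the right-limit is $y_i^+$, matching the first case since $y_i^-\in[y_i^-,y_i^+)$. At $y=y_i^+$, small right-perturbations typically lie in the complement of $\bigcup_j[y_j^-,y_j^+]$, where $T(T^+(\cdot))$ is the identity, giving right-limit $y_i^+$, matching the ``else'' case. The corresponding evaluations for $T^+\circ T$ at $x=x_i^-$ and $x=x_i^+$ are structurally identical and yield $x_i^+$ and $x_i^+$ respectively, both consistent with the target formula.

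The one delicate configuration will be when two of the distinguished intervals abut, e.g.\ $y_i^+=y_{i+1}^-$ (no strictly increasing stretch between consecutive jumps of $T$) or $x_i^+=x_{i+1}^-$ (adjacent plateaus of $T$ at different heights), in which case no small right-neighbourhood of the endpoint sits in the ``else'' region. I would handle this by noting that the endpoint then belongs to the next half-open interval of the first case (e.g.\ $y_i^+=y_{i+1}^-\in[y_{i+1}^-,y_{i+1}^+)$), so the target formula already predicts value $y_{i+1}^+$; and the right-limit computation picks up exactly $y_{i+1}^+$, so the formula remains self-consistent. The left-continuous half of the lemma is then proved by the mirror scheme, with ``right'' replaced by ``left'' throughout and using left-continuity of $T^-$ in place of right-continuity of $T^+$.
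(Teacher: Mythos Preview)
Your proposal is correct and follows essentially the same approach as the paper: use Lemma~\ref{lem:formTT+} for the interiors and complements, then exploit that the composition of nondecreasing right-continuous functions is right-continuous to fill in the endpoint values by right-limits (and symmetrically for the left-continuous case). Your write-up is in fact more detailed than the paper's one-line proof, in particular by explicitly treating the abutting case $y_i^+=y_{i+1}^-$.
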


\section{Conclusion}
This manuscript tries to unify, organize, generalize, and correct some statements about generalized inverses. Since this is just the latest work in a long succession of notes claiming to do just that, we close with only cautious optimism of having done so successfully.

\printbibliography

\end{document}